 \newtheoremstyle{mytheorem}
 {3pt}
 {3pt}
 {\slshape}
 {}
 {\bfseries}
 {.}
 { }
 {}
\numberwithin{equation}{section}
\theoremstyle{theorem}
\newtheorem{theorem}{Theorem}[section]
\newtheorem{corollary}[theorem]{Corollary}
\newtheorem{lemma}[theorem]{Lemma}
\theoremstyle{definition}
\newtheorem{remark}{Remark}[section]
\newcommand{\Keywords}[1]{\ifthenelse{\isempty{#1}}{}{\smallskip \smallskip \noindent \textbf{Keywords}. #1}}
\newcommand{\MSC}[2][2010]{\ifthenelse{\isempty{#2}}{}{\smallskip \smallskip \noindent \textbf{#1MSC}. #2}}
\newcommand{\abstractnote}[1]{\ifthenelse{\isempty{#1}}{}{\smallskip \smallskip \noindent \textsuperscript{\dag}#1}}
\def\specialsection{\@startsection{section}{1}%
  \z@{\linespacing\@plus\linespacing}{.5\linespacing}%
  {\normalfont}}
\def\section{\@startsection{section}{1}%
  \z@{.7\linespacing\@plus\linespacing}{.5\linespacing}%
  {\normalfont\scshape}}
\patchcmd{\@settitle}{\uppercasenonmath\@title}{\Large\boldmath}{}{}
\patchcmd{\@settitle}{\begin{center}}{\begin{flushleft}}{}{}
\patchcmd{\@settitle}{\end{center}}{\end{flushleft}}{}{}
\patchcmd{\@setauthors}{\MakeUppercase}{\normalsize}{}{}
\patchcmd{\@setauthors}{\centering}{\raggedright}{}{}
\patchcmd{\section}{\scshape}{\large\bfseries\boldmath}{}{}
\patchcmd{\subsection}{\bfseries}{\bfseries\boldmath}{}{}
\renewcommand{\@secnumfont}{\bfseries}
\patchcmd{\@startsection}{\@afterindenttrue}{\@afterindentfalse}{}{}
\patchcmd{\abstract}{\leftmargin3pc}{\leftmargin1pc}{}{}
\def\maketitle{\par
  \@topnum\z@ 
  \@setcopyright
  \thispagestyle{empty}
  \ifx\@empty\shortauthors \let\shortauthors\shorttitle
  \else \andify\shortauthors
  \fi
  \@maketitle@hook
  \begingroup
  \@maketitle
  \toks@\@xp{\shortauthors}\@temptokena\@xp{\shorttitle}%
  \toks4{\def\\{ \ignorespaces}}
  \edef\@tempa{%
    \@nx\markboth{\the\toks4
      \@nx\MakeUppercase{\the\toks@}}{\the\@temptokena}}%
  \@tempa
  \endgroup
  \c@footnote\z@
  \@cleartopmattertags
}
\newcommand{\f}[1]{\ifthenelse{\equal{#1}{1}}{(q;q)_\infty}{(q^{#1};q^{#1})_{\infty}}}
\newcommand{\ee}[1]{\ifthenelse{\equal{#1}{1}}{E(q)}{E(q^{#1})}}
\newcommand{\fl}[1]{\left\lfloor#1\right\rfloor}
\newcommand{\bb}{\mathfrak{b}}
\title[Congruences arising from a second order mock theta function]{An infinite family of congruences arising from a second order mock theta function}
\author[S. Chern]{Shane Chern}
\address[S. Chern]{Department of Mathematics, The Pennsylvania State University, University Park, PA 16802, USA}
\email{shanechern@psu.edu}
\author[C. Wang]{Chun Wang}
\address[C. Wang]{Department of Mathematics, East China Normal University, 500 Dongchuan Road, Shanghai 200241, PR China}
\email{wangchunmath@outlook.com}
\date{}
\begin{document}

%

\maketitle

\begin{abstract}
Let $\beta(q)=\sum_{n\ge 0} \mathfrak{b}(n)q^n$ be a second order mock theta function defined by
$$\sum_{n\ge 0}\frac{q^{n(n+1)}(-q^2;q^2)_n}{(q;q^2)_{n+1}^2}.$$
In this paper, we obtain an infinite family of congruences modulo powers of $3$ for $\mathfrak{b}(n)$.

\Keywords{Mock theta function, Ramanujan-type congruence, Ramanujan's ${}_1\psi_1$ identity.}

\MSC{11P83, 05A17.}
\end{abstract}

\section{Introduction}

In his last letter to Hardy \cite[pp.~220--223]{BR1995}, Ramanujan defined 17 functions, which he called mock theta functions. These functions subsequently attract the interest of many mathematicians. The interested readers may refer to \cite{BM2012} for a nice survey.

Let the $q$-shifted factorials be
\begin{align*}
(a)_n=(a;q)_n&:=\prod_{k=0}^{n-1}(1-aq^{k}),\\
(a)_\infty=(a;q)_{\infty}&:=\prod_{k\ge 0} (1-aq^{k}),\\
(a_1,a_2,\cdots,a_m;q)_\infty&:=(a_1;q)_\infty(a_2;q)_\infty\cdots(a_m;q)_\infty.
\end{align*}

One of the second order mock theta functions is defined by
\begin{equation}\label{B2}
\beta(q):=\sum_{n\ge 0}\frac{q^{n(n+1)}(-q^2;q^2)_n}{(q;q^2)_{n+1}^2}=\sum_{n\ge 0}\frac{q^n (-q;q^2)_n}{(q;q^2)_{n+1}}.
\end{equation}
According to Andrews \cite{And1981}, this function does not appear in Ramanujan's Lost Notebook. However, Andrews \cite{And1981} also claimed that $\beta(q)$ may connect with another second order mock theta function $\mu(q)$ shown in the Lost Notebook as well as Mordell integrals through identities such as
$$q^{-1/8}\sqrt{\frac{\pi}{2\alpha}}\ \mu(q)=\frac{2\pi}{\alpha}q_1^{1/2}\beta(q_1)+\int_{-\infty}^\infty \frac{e^{-2\alpha x^2+\alpha x}}{1+e^{2\alpha x}}\ dx,$$
where $q=e^{-\alpha}$, $q_1=e^{-\alpha'}$,  $\alpha\alpha'=\pi^2$, and
$$\mu(q):=\sum_{n\ge 0}\frac{(-1)^n q^{n^2}(q;q^2)_n}{(-q^2;q^2)_n^2}.$$

Using Watson's $q$-analog of Whipple's theorem \cite[p.~100, Eq.~(3.4.1.5)]{Sla1966}, Andrews \cite{And1981} showed that
$$\beta(q)=\frac{(-q^2;q^2)_\infty}{(q^2;q^2)_\infty}\sum_{n=-\infty}^\infty \frac{(-1)^n q^{2n(n+1)}}{1-q^{2n+1}}.$$
From the above representation of $\beta(q)$, Gordon and McIntosh proved (cf.~\cite[p.~136]{BM2012})
\begin{equation}\label{even}
\frac{\beta(q)+\beta(-q)}{2}=\frac{(q^4;q^4)_\infty^5}{(q^2;q^2)_{\infty}^4}.
\end{equation}

Let
\begin{equation*}
\sum_{n\ge 0}\bb(n) q^n = \beta(q).
\end{equation*}
It follows that
\begin{equation}\label{eq:a2n}
\sum_{n\ge 0}\bb(2n) q^n = \frac{(q^2;q^2)_\infty^5}{(q;q)_{\infty}^4}.
\end{equation}

We notice that many Ramanujan-type congruences for mock theta functions (or functions related to mock theta functions) are derived by dissecting the function to a $q$-series quotient (i.e.~product or quotient of certain $q$-shifted factorials). The interested readers may refer to \cite{APSY2017,CGH2018,Wan2017} for details. Since the r.h.s.~of \eqref{eq:a2n}, which comes from the 2-dissection of $\beta(q)$, is also a $q$-series quotient, it is natural to consider the arithmetic properties of $\beta(q)$. In this paper, we shall show the following infinite family of congruences modulo powers of $3$.

\begin{theorem}\label{th:cong}
For $\alpha\ge 1$ and $n\ge 0$, we have
\begin{equation}\label{eq:cong}
\bb\left(2\cdot 3^{2\alpha-1}n+\frac{3^{2\alpha}-1}{2}\right)\equiv 0 \pmod{3^{2\alpha}}.
\end{equation}
\end{theorem}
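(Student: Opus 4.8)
The plan is to pass to the even part of $\beta$ and work with $a(n):=\bb(2n)$. Because $3^{2\alpha}\equiv1\pmod8$, the quantity $\tfrac{3^{2\alpha}-1}{2}$ is a multiple of $4$, so the argument $2\cdot3^{2\alpha-1}n+\tfrac{3^{2\alpha}-1}{2}$ is even; writing it as $2m$ with $m=3^{2\alpha-1}n+\tfrac{3^{2\alpha}-1}{4}$ and using \eqref{eq:a2n}, the theorem is equivalent to
\begin{equation*}
a\!\left(3^{2\alpha-1}n+\tfrac{3^{2\alpha}-1}{4}\right)\equiv0\pmod{3^{2\alpha}},\qquad \sum_{n\ge0}a(n)q^n=\frac{(q^2;q^2)_\infty^5}{(q;q)_\infty^4}=:f(q).
\end{equation*}
The progression becomes transparent after the substitution $q\mapsto q^4$: since $4\bigl(3^{2\alpha-1}n+\tfrac{3^{2\alpha}-1}{4}\bigr)+1=3^{2\alpha-1}(4n+3)$, the coefficients in question sit at the exponents of $qf(q^4)$ divisible by $3^{2\alpha-1}$ with quotient $\equiv3\pmod4$. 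Writing $U_3\bigl(\sum_k c_kq^k\bigr):=\sum_k c_{3k}q^k$ for the trisection operator, a direct index computation shows
\begin{equation*}
U_3^{\,2\alpha-1}\!\bigl(qf(q^4)\bigr)=\sum_{n\ge0}a\!\left(3^{2\alpha-1}n+\tfrac{3^{2\alpha}-1}{4}\right)q^{4n+3},
\end{equation*}
so the theorem is exactly the coefficientwise congruence $U_3^{\,2\alpha-1}\!\bigl(qf(q^4)\bigr)\equiv0\pmod{3^{2\alpha}}$.

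\textbf{Where the congruence comes from.} I would first isolate the mod $3$ mechanism. From $(q;q)_\infty^3\equiv(q^3;q^3)_\infty$ and $(q^2;q^2)_\infty^3\equiv(q^6;q^6)_\infty\pmod3$ one gets $f(q)\equiv\dfrac{(q^2;q^2)_\infty^2}{(q;q)_\infty}\cdot\dfrac{(q^6;q^6)_\infty}{(q^3;q^3)_\infty}\pmod3$. The first factor is $\sum_{k\ge0}q^{k(k+1)/2}$, supported on exponents $\equiv0,1\pmod3$, and the second is a series in $q^3$; hence $f$ has no exponent $\equiv2\pmod3$ modulo $3$, giving $a(3n+2)\equiv0\pmod3$ for free. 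The real content of the theorem is to promote this single factor of $3$ to the exact power $3^{2\alpha}$ and to make the argument reproduce itself under trisection.

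\textbf{The coupled trisection recurrence.} The engine I would construct is an explicit $3$-dissection of $f$, starting from the bilateral form
\begin{equation*}
f(q)=\frac{(q^2;q^2)_\infty}{(q;q)_\infty^2}\sum_{n=-\infty}^{\infty}\frac{(-1)^nq^{n(n+1)}}{1-q^{2n+1}},
\end{equation*}
which is the even part (in $q$) of Andrews' Lambert-series representation of $\beta$ and which can be trisected with the aid of Ramanujan's ${}_1\psi_1$ summation by sorting the index $n$ and the theta factors into residues modulo $3$. The structural output I am aiming for is a pair of integral series
\begin{equation*}
\Phi_k:=3^{-2k}\,U_3^{\,2k}\!\bigl(qf(q^4)\bigr),\qquad \Psi_k:=3^{-(2k+2)}\,U_3^{\,2k+1}\!\bigl(qf(q^4)\bigr),
\end{equation*}
supported on exponents $\equiv1$ and $\equiv3\pmod4$ respectively, satisfying the coupled system $U_3\Phi_k=9\,\Psi_k$ and $U_3\Psi_k=\Phi_{k+1}$ with $\Phi_0=qf(q^4)$. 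Granting this, a trivial induction gives $U_3^{\,2\alpha-1}\!\bigl(qf(q^4)\bigr)=3^{2\alpha}\Psi_{\alpha-1}$ with $\Psi_{\alpha-1}\in\mathbb Z[[q]]$, which is precisely the congruence \eqref{eq:cong}. To make the induction uniform in $k$ I would prove that every $\Phi_k$ and $\Psi_k$ lies in one fixed, finitely generated $\mathbb Z$-module of eta-quotients closed under $U_3$, so that the two displayed identities are a single modular equation of degree $3$ rather than infinitely many separate dissections.

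\textbf{The main obstacle.} The crux is the exact trisection, not the induction. The naive Frobenius count in the base case supplies only one factor of $3$ at the step $\Phi_k\to\Psi_k$, whereas the recurrence demands the factor $9$; producing this second factor, and simultaneously showing that the trisected series is again an eta-quotient of the same shape so that the module closes under $U_3$, is where the full strength of the ${}_1\psi_1$ summation and of the special form $\tfrac{(q^2;q^2)_\infty^5}{(q;q)_\infty^4}$ must be used. Concretely, the hard part will be to pin down the relevant modular equation together with the $3$-adic valuations of its coefficients; once that one identity is in place, the integrality of $\Phi_k,\Psi_k$ and the congruence for all $\alpha$ follow formally.
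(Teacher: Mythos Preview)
Your reduction to $g(n)=\bb(2n)$ with $\sum g(n)q^n=E(q^2)^5/E(q)^4$ is correct, and the Watson--Atkin skeleton (iterate $U_3$, track $3$-divisibility) is the right framework. But the proposal is a plan, not a proof, and one of its structural hopes is false. The ``coupled system'' $U_3\Phi_k=9\Psi_k$, $U_3\Psi_k=\Phi_{k+1}$ is tautological given your definitions of $\Phi_k,\Psi_k$; the entire content is the \emph{integrality} of $\Psi_k$, which is precisely the theorem to be proved. Your plan to secure that integrality by placing all $\Phi_k,\Psi_k$ inside one fixed \emph{finitely generated} $\mathbb Z$-module of eta-quotients stable under $U_3$ does not work: the iterates live in an infinite-rank module. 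In the paper's setup they sit in $\dfrac{E(q^6)^5}{E(q^3)^4}\,\mathbb Z[X]$ (resp.\ $\dfrac{E(q^2)^5}{E(q)^4}\,\mathbb Z[X]$) for the hauptmodul $X=\phi(-q^3)^4/\phi(-q)^4$, and the $X$-degree grows without bound under iteration, so no finite basis suffices and the problem cannot be collapsed to ``a single modular equation of degree~$3$''.

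What actually manufactures the extra factor of $3$ at each step is a $3$-adic valuation bound on the infinite matrices describing $U_3$ on powers of $X$. One introduces an auxiliary $\xi=q^{-2}E(q^2)^5E(q^9)^4/E(q)^4E(q^{18})^5$ so that the odd and even iterates are governed by $U(X^i)=\sum_j a(i,j)X^j$ and $U(\xi X^i)=\sum_j b(i,j)X^j$; the cases $i\le 3$ are computed elementarily from the $3$-dissections of $1/\phi(-q)$ and $\psi(q)$ (the ${}_1\psi_1$ summation is not used here---in the paper it appears only in the separate proof of \eqref{even}), and all $i\ge 4$ follow from a cubic recurrence obtained via Newton's identities. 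The key estimates $\pi(a(i,j))\ge\fl{(3j-i-1)/2}$ and $\pi(b(i,j))\ge\fl{(3j-i)/2}$, propagated through the recursion for the coefficient vectors $d_{2\alpha-1}$ of the $X$-expansion, yield $\pi(d_{2\alpha-1}(j))\ge 2\alpha$ for every $j$, which is exactly the congruence. Your final paragraph correctly names ``the relevant modular equation together with the $3$-adic valuations of its coefficients'' as the hard part, but that admission is in tension with the earlier finite-module hope, and nothing in the proposal supplies either the dissection identities or the valuation analysis.
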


Our approach is an elementary refinement of the Watson--Atkin style proof \cite{Atk1967,Wat1938} (which involves modular forms) of congruences modulo powers of a prime for some $q$-series quotient. On the one hand, with the aid of some 3-dissection identities for Ramanujan's theta functions, we are able to provide elementary proofs of several initial relations (see our Theorems \ref{th:UX} and \ref{th:UxiX}), and hence avoid using modular forms. On the other hand, the recurrence relation shown in the Watson--Atkin style proof merely relies on Newton's identity. Combining the two ingredients together, we therefore arrive at a completely elementary proof of Theorem \ref{th:cong}.

\section{The infinite family of identities}

\subsection{Ramanujan's theta functions and 3-dissections}

For notational convenience, we write $E(q):=(q;q)_\infty$ throughout this paper.

Let $\phi(q)$ and $\psi(q)$ be two of Ramanujan's theta functions given by
$$\phi(q):=\sum_{n=-\infty}^\infty q^{n^2} \text{ and } \psi(q):=\sum_{n\ge 0}q^{n(n+1)/2}.$$
It is well known that
$$\phi(-q)=\frac{E(q)^2}{E(q^2)} \text{ and } \psi(q)=\frac{E(q^2)^2}{E(q)}.$$

We have the following 3-dissection identities.

\begin{lemma}\label{le:3-dis-1}
It holds that
\begin{align}
\frac{1}{\phi(-q)}&=\frac{\phi(-q^9)^3}{\phi(-q^3)^4}\left(1+2qw(q^3)+4q^2 w(q^3)^2\right),\label{eq:1/-phi--3dis}\\
\psi(q)&=\psi(q^9)\left(\frac{1}{w(q^3)}+q\right),\label{eq:psi-3--dis}
\end{align}
where
\begin{equation*}
w(q)=\frac{E(q)E(q^6)^3}{E(q^2)E(q^3)^3}.
\end{equation*}
\end{lemma}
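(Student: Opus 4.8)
The plan is to prove the two identities separately, in each case reducing a theta series to an explicit $E$-quotient by splitting the summation index according to its residue modulo $3$ and then invoking the Jacobi triple product. Throughout I will use Ramanujan's general theta function $f(a,b):=\sum_{n=-\infty}^{\infty}a^{n(n+1)/2}b^{n(n-1)/2}=(-a;ab)_\infty(-b;ab)_\infty(ab;ab)_\infty$, together with the representations $\phi(-q)=\sum_{n}(-1)^n q^{n^2}$ and $\psi(q)=\sum_{n\ge 0}q^{n(n+1)/2}=f(q,q^3)$.

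For \eqref{eq:psi-3--dis}, I would partition $\psi(q)$ according to $n\bmod 3$, writing $T_m=m(m+1)/2$. Since $T_{3m+1}=\tfrac{(3m+1)(3m+2)}{2}=9T_m+1$, the class $n\equiv1$ contributes exactly $q\,\psi(q^9)$, while the classes $n\equiv0,2$ produce a series in $q^3$ that a direct comparison of exponents identifies with $f(q^3,q^6)$; thus $\psi(q)=f(q^3,q^6)+q\,\psi(q^9)$. It then remains to check $f(q^3,q^6)=\psi(q^9)/w(q^3)$, which is routine: the triple product turns $f(q^3,q^6)$ into $E(q^6)E(q^9)^2/(E(q^3)E(q^{18}))$ after using $(-q;q)_\infty=E(q^2)/E(q)$, and expanding $\psi(q^9)/w(q^3)=\tfrac{E(q^{18})^2}{E(q^9)}\cdot\tfrac{E(q^6)E(q^9)^3}{E(q^3)E(q^{18})^3}$ gives the same quantity.

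The reciprocal identity \eqref{eq:1/-phi--3dis} is the real obstacle, since one cannot dissect $1/\phi(-q)$ directly. Here I would use root-of-unity symmetrization. With $\omega=e^{2\pi i/3}$, multiplying numerator and denominator by $\phi(-\omega q)\phi(-\omega^2 q)$ gives
\[
\frac{1}{\phi(-q)}=\frac{\phi(-\omega q)\phi(-\omega^2 q)}{\phi(-q)\phi(-\omega q)\phi(-\omega^2 q)},
\]
whose denominator is a function of $q^3$. To evaluate it I would first establish $E(q)E(\omega q)E(\omega^2 q)=E(q^3)^4/E(q^9)$ from the elementary computation $\prod_{j=0}^2(1-\omega^{jk}q^k)=(1-q^k)^3$ or $1-q^{3k}$ according as $3\mid k$ or not; writing $\phi(-q)=E(q)^2/E(q^2)$ and applying this with $q$ and with $q^2$ yields $\phi(-q)\phi(-\omega q)\phi(-\omega^2 q)=\phi(-q^3)^4/\phi(-q^9)$. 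For the numerator I would insert the $3$-dissection $\phi(-q)=\phi(-q^9)-2q\,f(-q^3,-q^{15})$ (obtained as above by splitting $\sum_n(-1)^nq^{n^2}$ into $n\equiv0$ and $n\equiv\pm1\bmod 3$) into each factor: setting $A=\phi(-q^9)$ and $B=2q\,f(-q^3,-q^{15})$ gives $\phi(-\omega^j q)=A-\omega^j B$, whence $\phi(-\omega q)\phi(-\omega^2 q)=A^2+AB+B^2$ because $\omega+\omega^2=-1$ and $\omega^3=1$.

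Combining the numerator and denominator, the whole of \eqref{eq:1/-phi--3dis} reduces to the single key relation
\[
f(-q^3,-q^{15})=\phi(-q^9)\,w(q^3),
\]
since $A^2+AB+B^2=\phi(-q^9)^2\bigl(1+2q\,w(q^3)+4q^2 w(q^3)^2\bigr)$ exactly when this holds, after which the factor $\phi(-q^9)/\phi(-q^3)^4$ from the denominator produces the asserted $\phi(-q^9)^3/\phi(-q^3)^4$. I expect this last relation to be the technical heart, though it too is settled by the triple product: it turns $f(-q^3,-q^{15})=(q^3;q^{18})_\infty(q^{15};q^{18})_\infty(q^{18};q^{18})_\infty$ into $E(q^3)E(q^{18})^2/(E(q^6)E(q^9))$, which is precisely the $E$-quotient obtained by expanding $\phi(-q^9)\,w(q^3)$. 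The main care required throughout is bookkeeping of residues modulo $9$ and $18$ when splitting the infinite products.
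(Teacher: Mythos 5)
Your proof is correct and follows essentially the same route as the paper: the paper also obtains \eqref{eq:1/-phi--3dis} by substituting $\omega q$ and $\omega^2 q$ into the $3$-dissection $\phi(-q)=\phi(-q^9)\left(1-2qw(q^3)\right)$ and multiplying the two results, while simply citing Hirschhorn for that dissection and for \eqref{eq:psi-3--dis}. The only difference is that you make the argument self-contained, deriving both dissections and the product $\phi(-q)\phi(-\omega q)\phi(-\omega^2 q)=\phi(-q^3)^4/\phi(-q^9)$ directly from the Jacobi triple product, and all of these computations check out.
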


One may refer to \cite[Eq.~(14.3.5)]{Hir2017} for \eqref{eq:psi-3--dis}. Let $\omega$ be a cube root of unity other than $1$. We substitute $\omega q$ and $\omega^2 q$ for $q$ in the 3-dissection of $\phi(-q)$ (cf.~\cite[Eq.~(14.3.4)]{Hir2017})
$$\phi(-q)=\phi(-q^9)\left(1-2qw(q^3)\right)$$
and multiply the two results to get \eqref{eq:1/-phi--3dis}.

We further notice that $q w(q)^3$ can be represented as follows.

\begin{lemma}\label{le:qw3}
It holds that
\begin{equation}
q w(q)^3=\frac{1}{8}\left(1-\frac{\phi(-q)^4}{\phi(-q^3)^4}\right).
\end{equation}
\end{lemma}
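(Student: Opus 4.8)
The plan is to reduce the claimed eta-quotient identity to a product identity over the three cube roots of unity, exploiting the 3-dissection $\phi(-q)=\phi(-q^9)\left(1-2qw(q^3)\right)$ recorded just above. Since that dissection already carries a factor of $w(q^3)$ while the target carries $w(q)^3$, the natural device is to multiply together the three conjugates obtained by the substitutions $q\mapsto q,\ \omega q,\ \omega^2 q$: this collapses the linear factor into a cube and produces $w(q^3)^3$, which will match the target after a final rescaling $q^3\mapsto q$.

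Concretely, I would first note that both $w(q^3)$ and $\phi(-q^9)$ are invariant under $q\mapsto\omega q$, since they depend on $q$ only through $q^3$ and $\omega^3=1$. Hence
\[
\phi(-\omega^j q)=\phi(-q^9)\bigl(1-2\omega^j q\,w(q^3)\bigr),\qquad j=0,1,2 .
\]
Multiplying the three identities and using $\prod_{j=0}^{2}(1-\omega^j x)=1-x^3$ with $x=2qw(q^3)$ would give
\[
\phi(-q)\,\phi(-\omega q)\,\phi(-\omega^2 q)=\phi(-q^9)^3\bigl(1-8q^3w(q^3)^3\bigr).
\]

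The remaining task, and the one genuine computation, is to evaluate the left-hand product independently as an eta quotient. Writing $\phi(-q)=E(q)^2/E(q^2)$, this reduces to the infinite-product factorization
\[
\prod_{j=0}^{2}E(\omega^j q)=\frac{E(q^3)^4}{E(q^9)},
\]
which I would prove by grouping the factors $\prod_{j}(1-\omega^{jm}q^m)$ according to $m\bmod 3$: the terms with $3\mid m$ contribute $(1-q^m)^3$ and assemble into $E(q^3)^3$, while those with $3\nmid m$ contribute $1-q^{3m}$ and assemble into $E(q^3)/E(q^9)$. Applying this both with $q$ and with $q^2$ (the latter because $\omega^{2j}$ again runs over all three cube roots), I obtain
\[
\phi(-q)\,\phi(-\omega q)\,\phi(-\omega^2 q)=\frac{\bigl(E(q^3)^4/E(q^9)\bigr)^2}{E(q^6)^4/E(q^{18})}=\frac{\phi(-q^3)^4}{\phi(-q^9)} .
\]

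Comparing the two evaluations of the product and cancelling $\phi(-q^9)^3$ would yield $1-8q^3w(q^3)^3=\phi(-q^3)^4/\phi(-q^9)^4$. As every surviving term is a function of $q^3$ alone, I may finally replace $q^3$ by $q$ to conclude $qw(q)^3=\tfrac18\bigl(1-\phi(-q)^4/\phi(-q^3)^4\bigr)$, which is exactly the assertion. I expect the main obstacle to be purely bookkeeping: correctly establishing the factorization $\prod_j E(\omega^j q)=E(q^3)^4/E(q^9)$ and matching it against $\phi(-q^3)^4/\phi(-q^9)$. The cube-root-of-unity multiplication itself is immediate once the invariance of $w(q^3)$ and $\phi(-q^9)$ is observed.
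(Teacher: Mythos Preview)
Your argument is correct and complete: the invariance of $\phi(-q^9)$ and $w(q^3)$ under $q\mapsto\omega q$, the factorization $\prod_j(1-\omega^j x)=1-x^3$, the eta-product computation $\prod_{j=0}^2 E(\omega^j q)=E(q^3)^4/E(q^9)$, and the final substitution $q^3\mapsto q$ all check out exactly as you describe.

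Your route, however, is genuinely different from the paper's. The paper does not touch the cube roots of unity here at all; instead it first rewrites
\[
qw(q)^3=q\,\frac{\phi(-q)}{\phi(-q^3)^3}\cdot\frac{\psi(q^3)^3}{\psi(q)}
\]
directly from the definition $w(q)=E(q)E(q^6)^3/(E(q^2)E(q^3)^3)$, and then invokes an external identity of Shen,
\[
\frac{\psi(q^3)^3}{\psi(q)}=\frac{1}{8q}\left(\frac{\phi(-q^3)^3}{\phi(-q)}-\frac{\phi(-q)^3}{\phi(-q^3)}\right),
\]
after which the result drops out in one line. Your proof is more self-contained: it needs nothing beyond the $3$-dissection $\phi(-q)=\phi(-q^9)(1-2qw(q^3))$ already recorded in the paper, and in fact recycles the very cube-root-of-unity trick the paper used immediately above to derive the dissection of $1/\phi(-q)$. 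The paper's proof is shorter on the page but leans on a cited lemma; yours trades that citation for a small explicit eta-product computation.
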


\begin{proof}
We have
\begin{align*}
qw(q)^3=q\left(\frac{E(q)E(q^6)^3}{E(q^2)E(q^3)^3}\right)^3=q\frac{\phi(-q)}{\phi(-q^3)^3}\frac{\psi(q^3)^3}{\psi(q)}.
\end{align*}
Recall the following identity (cf.~\cite[Eq.~(3.2)]{She1994})
$$\frac{\psi(q^3)^3}{\psi(q)}=\frac{1}{8q}\left(\frac{\phi(-q^3)^3}{\phi(-q)}-\frac{\phi(-q)^3}{\phi(-q^3)}\right).$$
Hence
\begin{align*}
qw(q)^3&=q\frac{\phi(-q)}{\phi(-q^3)^3}\left(\frac{1}{8q}\left(\frac{\phi(-q^3)^3}{\phi(-q)}-\frac{\phi(-q)^3}{\phi(-q^3)}\right)\right)\\
&=\frac{1}{8}\left(1-\frac{\phi(-q)^4}{\phi(-q^3)^4}\right).
\end{align*}
\end{proof}

\subsection{Initial relations}

Let $q:=\exp(2\pi i\tau)$ with $\tau\in\mathbb{H}$, the upper half complex plane. We put
\begin{align*}
X=X(\tau)&:=\frac{\ee{2}^4 \ee{3}^8}{\ee{1}^8 \ee{6}^4}=\frac{\phi(-q^3)^4}{\phi(-q)^4},\\
\xi=\xi(\tau)&:=q^{-2}\frac{\ee{2}^5 \ee{9}^4}{\ee{1}^4 \ee{18}^5}=q^{-2}\frac{\psi(q)^2 \phi(-q^9)}{\phi(-q) \psi(q^9)^2}.
\end{align*}

For a $q$-series expansion $\sum_{n\ge n_0} a_n q^n$, we introduce the $U$-opeartor defined by
$$U\left(\sum_{n\ge n_0} a_n q^n\right):=\sum_{3n\ge n_0} a_{3n} q^n.$$

We shall show
\begin{theorem}\label{th:UX}
It holds that
\begin{align}
U(X)&=10X- 36X^2+ 27X^3,\\
U(X^2)&=-8X+306X^2-2160X^3+5508X^4-5832X^5+2187X^6,\\
U(X^3)&=X- 360X^2+ 10566X^3- 99144X^4+ 423549X^5- 944784X^6\notag\\
&\qquad + 1141614X^7 -708588X^8 +177147X^9.\label{eq:UX3}
\end{align}
\end{theorem}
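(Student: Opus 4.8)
The plan is to convert the action of $U$ on $X$ into a purely algebraic operation by exploiting the $3$-dissection of $1/\phi(-q)$ in Lemma~\ref{le:3-dis-1}. Writing $q=e^{2\pi i\tau}$, I would set $Y:=\phi(-q^9)^4/\phi(-q^3)^4\ (=X(3\tau))$ and note that both $Y$ and $w(q^3)$ are power series in $q^3$. Raising \eqref{eq:1/-phi--3dis} to the fourth power and multiplying through by $\phi(-q^3)^4$, the prefactor $\phi(-q^3)^4\bigl(\phi(-q^9)^3/\phi(-q^3)^4\bigr)^4$ collapses to $\bigl(\phi(-q^9)^4/\phi(-q^3)^4\bigr)^3=Y^3$, yielding the key identity
\begin{equation*}
X=Y^3\bigl(1+2q\,w(q^3)+4q^2w(q^3)^2\bigr)^4,\qquad X^m=Y^{3m}\bigl(1+2q\,w(q^3)+4q^2w(q^3)^2\bigr)^{4m}.
\end{equation*}

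Next I would apply $U$. Substituting $u:=2q\,w(q^3)$ turns the trinomial into $1+u+u^2$, so that $X^m=Y^{3m}\sum_k c_k^{(m)}u^k$, where $(1+u+u^2)^{4m}=\sum_k c_k^{(m)}u^k$. Since $Y$ and $w(q^3)$ are series in $q^3$, the term $Y^{3m}u^k=2^k\,Y^{3m}w(q^3)^k\,q^k$ carries the single explicit factor $q^k$ and hence lands in the residue class $k\bmod 3$; only the indices $k=3j$ survive $U$. Applying the elementary rule $U\bigl(q^r g(q^3)\bigr)=g(q)$ for $r\equiv 0$ and $=0$ for $r\equiv 1,2\pmod 3$, each surviving block undergoes $q^3\mapsto q$, so that $Y\mapsto X$ and $w(q^3)\mapsto w(q)$, giving
\begin{equation*}
U(X^m)=X^{3m}\sum_{j\ge 0}c_{3j}^{(m)}\,2^{3j}\bigl(q\,w(q)^3\bigr)^{j}.
\end{equation*}

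Finally I would invoke Lemma~\ref{le:qw3}, which gives $q\,w(q)^3=\tfrac18\bigl(1-\phi(-q)^4/\phi(-q^3)^4\bigr)=\tfrac{X-1}{8X}$; thus $2^{3j}\bigl(q\,w(q)^3\bigr)^{j}=\bigl(8\,q\,w(q)^3\bigr)^{j}=\bigl((X-1)/X\bigr)^{j}$ and therefore
\begin{equation*}
U(X^m)=X^{3m}\sum_{j\ge 0}c_{3j}^{(m)}\Bigl(\frac{X-1}{X}\Bigr)^{j}=\sum_{j\ge 0}c_{3j}^{(m)}\,X^{3m-j}(X-1)^{j},
\end{equation*}
a polynomial in $X$ of degree $3m$. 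For $m=1$, expanding $(1+u+u^2)^4=1+4u+10u^2+16u^3+19u^4+16u^5+10u^6+4u^7+u^8$ gives $(c_0^{(1)},c_3^{(1)},c_6^{(1)})=(1,16,10)$, whence $U(X)=X^3+16X^2(X-1)+10X(X-1)^2=10X-36X^2+27X^3$; the cases $m=2$ and $m=3$ proceed identically, requiring only the coefficients of $(1+u+u^2)^8$ and $(1+u+u^2)^{12}$ at indices divisible by $3$.

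The conceptual content lies entirely in the first identity and the $U$-substitution; the remainder is bookkeeping. The main obstacle will be the size of the final computations rather than any difficulty of principle: for $m=3$ one must expand $(1+u+u^2)^{12}$ (degree $24$, with nine surviving coefficients $c_0^{(3)},c_3^{(3)},\dots,c_{24}^{(3)}$) and then collect the degree-$9$ polynomial $\sum_{j=0}^{8}c_{3j}^{(3)}X^{9-j}(X-1)^j$ to match \eqref{eq:UX3}. One should also take care to justify the elementary $U$-rule used above and the attendant formal manipulations, though these are routine.
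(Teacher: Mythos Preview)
Your proposal is correct and follows essentially the same route as the paper: both expand $X^m$ via the $3$-dissection \eqref{eq:1/-phi--3dis}, apply $U$ to pick out the terms with $q$-exponent divisible by $3$, and then eliminate $w$ using Lemma~\ref{le:qw3}. Your substitution $u=2q\,w(q^3)$ (which turns the trinomial into $1+u+u^2$ and makes $8q\,w(q)^3=(X-1)/X$ appear naturally) is a tidy repackaging of exactly the computation the paper carries out with explicit coefficients.
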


\begin{proof}
We have
\begin{align*}
X&=\frac{\phi(-q^3)^4}{\phi(-q)^4}=\phi(-q^3)^4\left(\frac{\phi(-q^9)^3}{\phi(-q^3)^4}\left(1+2qw(q^3)+4q^2 w(q^3)^2\right)\right)^4\\
&=\frac{\phi(-q^9)^{12}}{\phi(-q^3)^{12}}\big(1 + 8 q w(q^3) + 40 q^2 w(q^3)^2 + 128 q^3 w(q^3)^3 + 304 q^4 w(q^3)^4\\
&\quad\quad + 512 q^5 w(q^3)^5 + 
 640 q^6 w(q^3)^6 + 512 q^7 w(q^3)^7 + 256 q^8 w(q^3)^8\big).
\end{align*}
It therefore follows that
\begin{align*}
U(X)=\frac{\phi(-q^3)^{12}}{\phi(-q)^{12}}\big(1 + 128 q w(q)^3 + 640 q^2 w(q)^6 \big).
\end{align*}
Applying Lemma \ref{le:qw3}, we have
\begin{align*}
U(X)&=\frac{\phi(-q^3)^{12}}{\phi(-q)^{12}}\left(1 + 128 \left(\frac{1}{8}\left(1-\frac{\phi(-q)^4}{\phi(-q^3)^4}\right)\right) + 640 \left(\frac{1}{8}\left(1-\frac{\phi(-q)^4}{\phi(-q^3)^4}\right)\right)^2 \right)\\
&=10\frac{\phi(-q^3)^4}{\phi(-q)^4}-36\left(\frac{\phi(-q^3)^4}{\phi(-q)^4}\right)^2+27\left(\frac{\phi(-q^3)^4}{\phi(-q)^4}\right)^3\\
&=10X- 36X^2+ 27X^3.
\end{align*}

Similarly, we have
\begin{align*}
U(X^2)&=\frac{\phi(-q^3)^{24}}{\phi(-q)^{24}}\big(1 + 896 q w(q)^3 + 50176 q^2 w(q)^6 + 520192 q^3 w(q)^9\\
&\quad\quad + 1089536 q^4 w(q)^{12} + 262144 q^5 w(q)^{15} \big)\\
&=\frac{\phi(-q^3)^{24}}{\phi(-q)^{24}}\Bigg(1 + 896 \left(\frac{1}{8}\left(1-\frac{\phi(-q)^4}{\phi(-q^3)^4}\right)\right) + 50176 \left(\frac{1}{8}\left(1-\frac{\phi(-q)^4}{\phi(-q^3)^4}\right)\right)^2\\
&\quad\quad  + 520192 \left(\frac{1}{8}\left(1-\frac{\phi(-q)^4}{\phi(-q^3)^4}\right)\right)^3 + 1089536 \left(\frac{1}{8}\left(1-\frac{\phi(-q)^4}{\phi(-q^3)^4}\right)\right)^4\\
&\quad\quad   + 262144 \left(\frac{1}{8}\left(1-\frac{\phi(-q)^4}{\phi(-q^3)^4}\right)\right)^5 \Bigg)\\
&=-8X+306X^2-2160X^3+5508X^4-5832X^5+2187X^6.
\end{align*}

The proof of \eqref{eq:UX3} follows in the same way.
\end{proof}

We also have
\begin{theorem}\label{th:UxiX}
It holds that
\begin{align}
U(\xi)&=9X,\label{eq:xi1}\\
U(\xi X)&=-9X+ 252X^2- 891X^3+ 729X^4,\\
U(\xi X^2)&=X -378X^2+ 8613X^3 -54675X^4 +138510X^5-150903X^6\notag\\
&\qquad+ 59049X^7,\label{eq:UxiX2}\\
U(\xi X^3)&=147X^2 -14553X^3 +312255X^4 -2617839X^5 +10764414X^6\notag\\
&\qquad -23914845X^7 +29288304X^8 -18600435X^9 + 4782969X^{10}.\label{eq:UxiX3}
\end{align}
\end{theorem}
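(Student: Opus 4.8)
The plan is to mirror the proof of Theorem~\ref{th:UX}, exploiting the fact that after the 3-dissections of Lemma~\ref{le:3-dis-1} both $\xi$ and $X$ become \emph{Laurent polynomials in the single quantity} $s:=qw(q^3)$, up to a prefactor that is a power series in $q^3$. First I would rewrite $\xi$. Substituting $\psi(q)=\psi(q^9)\bigl(1/w(q^3)+q\bigr)$ from \eqref{eq:psi-3--dis} and the 3-dissection \eqref{eq:1/-phi--3dis}, the factors $\psi(q^9)^2$ cancel and I obtain
$$\xi=\frac{\phi(-q^9)^4}{\phi(-q^3)^4}\cdot q^{-2}\left(\frac{1}{w(q^3)^2}+\frac{2q}{w(q^3)}+q^2\right)\left(1+2qw(q^3)+4q^2w(q^3)^2\right).$$
The essential observation is that in the bracket every monomial carries equal powers of $q$ and of $w(q^3)$; writing $s=qw(q^3)$, the bracket collapses to the Laurent polynomial $R_0(s)=s^{-2}+4s^{-1}+9+10s+4s^2=s^{-2}(1+s)^2(1+2s+4s^2)$, so that $\xi=\tfrac{\phi(-q^9)^4}{\phi(-q^3)^4}R_0(s)$.

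Next, the proof of Theorem~\ref{th:UX} already records $X=\tfrac{\phi(-q^9)^{12}}{\phi(-q^3)^{12}}(1+2s+4s^2)^4$, whence
$$\xi X^k=\frac{\phi(-q^9)^{12k+4}}{\phi(-q^3)^{12k+4}}\,R_0(s)\,(1+2s+4s^2)^{4k}=:\frac{\phi(-q^9)^{12k+4}}{\phi(-q^3)^{12k+4}}\,R_k(s),$$
where $R_k(s)$ is a Laurent polynomial in $s$ supported in degrees from $-2$ to $8k+2$, multiplied by a power series in $q^3$. Then I would apply $U$. Since $w(q^3)$ and the prefactor are power series in $q^3$, the residue of a monomial $s^j=q^jw(q^3)^j$ modulo $3$ is governed solely by $j$, so $U$ retains exactly the terms $s^{3m}$; for these, $s^{3m}=(q^3w(q^3)^3)^m\mapsto(qw(q)^3)^m$ while the prefactor maps to $\tfrac{\phi(-q^3)^{12k+4}}{\phi(-q)^{12k+4}}=X^{3k+1}$. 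This yields
$$U(\xi X^k)=X^{3k+1}\sum_{m\ge 0}\bigl([s^{3m}]R_k(s)\bigr)\,(qw(q)^3)^m.$$
Finally, substituting Lemma~\ref{le:qw3} in the form $qw(q)^3=\tfrac18\bigl(1-1/X\bigr)$ turns each summand into $8^{-m}X^{3k+1-m}(X-1)^m$, and collecting terms gives the asserted polynomials; I would carry out the coefficient extraction explicitly for $k=1,2,3$.

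The main obstacle is purely the bookkeeping for $k=3$: there $R_3(s)=R_0(s)(1+2s+4s^2)^{12}$ has degrees from $-2$ to $26$, and one must correctly read off the coefficients $[s^{3m}]R_3(s)$ for $0\le m\le 8$ before substituting. A secondary point worth recording is that the output is genuinely a polynomial in $X$ with no negative powers: the lowest surviving monomial $s^0$ contributes $X^{3k+1}$, while the substitution lowers exponents by at most $m\le\lfloor(8k+2)/3\rfloor\le 3k$, so the smallest exponent of $X$ that appears is at least $1$ (and is exactly $2$ when $k=3$, matching the fact that \eqref{eq:UxiX3} has no $X$-term).
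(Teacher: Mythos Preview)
Your proposal is correct and is essentially the same argument as the paper's: both expand $\xi X^k$ via the 3-dissections of Lemma~\ref{le:3-dis-1}, apply $U$ to pick out the monomials in $qw(q^3)$ of exponent divisible by $3$, and then substitute $qw(q)^3=\tfrac18(1-1/X)$ from Lemma~\ref{le:qw3}. Your introduction of the shorthand $s=qw(q^3)$ and the uniform formula $U(\xi X^k)=X^{3k+1}\sum_m[s^{3m}]R_k(s)\,(qw(q)^3)^m$ streamline the bookkeeping, but the method is identical.
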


\begin{proof}
We have
\begin{align*}
\xi&=q^{-2}\frac{\psi(q)^2 \phi(-q^9)}{\phi(-q) \psi(q^9)^2}\\
&=q^{-2}\frac{\phi(-q^9)^{4}}{\phi(-q^3)^{4}}\left(1+2qw(q^3)+4q^2 w(q^3)^2\right)\left(\frac{1}{w(q^3)}+q\right)^2\\
&=\frac{\phi(-q^9)^{4}}{\phi(-q^3)^{4}}\left(\frac{1}{q^2 w(q^3)^2}+\frac{4}{q w(q^3)}+9+10q w(q^3)+4q^2 w(q^3)^2\right).
\end{align*}
It follows that
\begin{align*}
U(\xi)=9\frac{\phi(-q^3)^{4}}{\phi(-q)^{4}}=9X.
\end{align*}

Similarly, we have
\begin{align*}
U(\xi X)&=\frac{\phi(-q^3)^{16}}{\phi(-q)^{16}}\left(81 + 3312 q w(q)^3 + 14400 q^2 w(q)^6 + 4608 q^3 w(q)^9\right)\\
&=\frac{\phi(-q^3)^{16}}{\phi(-q)^{16}}\Bigg(81 + 3312 \left(\frac{1}{8}\left(1-\frac{\phi(-q)^4}{\phi(-q^3)^4}\right)\right)\\
&\quad\quad + 14400 \left(\frac{1}{8}\left(1-\frac{\phi(-q)^4}{\phi(-q^3)^4}\right)\right)^2  + 4608 \left(\frac{1}{8}\left(1-\frac{\phi(-q)^4}{\phi(-q^3)^4}\right)\right)^3 \Bigg)\\
&=-9X+ 252X^2- 891X^3+ 729X^4.
\end{align*}

The proofs of \eqref{eq:UxiX2} and \eqref{eq:UxiX3} follow in the same way.
\end{proof}

\begin{remark}
Here \eqref{eq:xi1} is also a direct consequence of a result of Chern and Tang; see \cite[Corollary 2.7]{CT2018}.
\end{remark}

\subsection{Further relations}

We now define the following two matrices $(a_{i,j})_{i\ge 1,\, j\ge 1}$ and $(b_{i,j})_{i\ge 1,\, j\ge 1}$ (for all matrices defined here and below, we sometimes use $m(i,j)$ to denote $m_{i,j}$):

\begin{enumerate}[(1).]
\item For $1\le i\le 3$, the entries $a(i,j)$ and $b(i,j)$ are respectively given by
\begin{align*}
U(X^i)&=\sum_{j\ge 1}a(i,j)X^j,\\
U(\xi X^i)&=\sum_{j\ge 1}b(i,j)X^j.
\end{align*}
We refer to Theorems \ref{th:UX} and \ref{th:UxiX} for their exact values.
\item For $i\ge 4$, both matrices satisfy the following recurrence relation (here $m$ stands for a matrix):
\begin{align*}
m(i,j)=&30m(i-1,j-1)-108m(i-1,j-2)+81m(i-1,j-3)\\
&-12m(i-2,j-1)+9m(i-2,j-2)+m(i-3,j-1).
\end{align*}
%
\end{enumerate}
Note that all unspecified and undefined entries in a matrix are assumed to be $0$. 

\begin{theorem}
For $i\ge 1$, we have
\begin{align}
U(X^i)&=\sum_{j\ge 1}a(i,j)X^j,\label{eq:rec:a}\\
U(\xi X^i)&=\sum_{j\ge 1}b(i,j)X^j.\label{eq:rec:b}
\end{align}
\end{theorem}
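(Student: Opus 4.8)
The plan is to establish \eqref{eq:rec:a} and \eqref{eq:rec:b} simultaneously by strong induction on $i$, the cases $1\le i\le 3$ being precisely the content of Theorems \ref{th:UX} and \ref{th:UxiX} (which are the definitions of $a(i,j)$ and $b(i,j)$ in that range). The whole induction is driven by one functional recurrence: I claim that for $i\ge 4$,
$$U(X^i)=(30X-108X^2+81X^3)U(X^{i-1})+(-12X+9X^2)U(X^{i-2})+X\,U(X^{i-3}),$$
and that the identical relation holds with every $X^j$ replaced by $\xi X^j$. Granting this, the proof closes immediately: writing $f_i=\sum_j m(i,j)X^j$, one checks that the six-term matrix recurrence of item (2) is exactly the coefficientwise transcription of
$$f_i=(30X-108X^2+81X^3)f_{i-1}+(-12X+9X^2)f_{i-2}+X\,f_{i-3},$$
so the recurrence carries \eqref{eq:rec:a}--\eqref{eq:rec:b} from $i-1,i-2,i-3$ up to $i$.

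To obtain the functional recurrence I would use $U(g)(\tau)=\tfrac13\sum_{k=0}^{2}g\!\big(\tfrac{\tau+k}{3}\big)$. Setting $y_k=X\!\big(\tfrac{\tau+k}{3}\big)$ and $\eta_k=\xi\!\big(\tfrac{\tau+k}{3}\big)$, this reads $U(X^i)=\tfrac13(y_0^i+y_1^i+y_2^i)$ and $U(\xi X^i)=\tfrac13(\eta_0y_0^i+\eta_1y_1^i+\eta_2y_2^i)$. Let $e_1,e_2,e_3$ be the elementary symmetric functions of $y_0,y_1,y_2$; by Vieta each $y_k$ satisfies $y_k^3=e_1y_k^2-e_2y_k+e_3$, and multiplying by $y_k^{i-3}$ (respectively by $\eta_k y_k^{i-3}$) and summing over $k$ gives
$$U(X^i)=e_1U(X^{i-1})-e_2U(X^{i-2})+e_3U(X^{i-3})$$
and its $\xi$-weighted analogue for all $i\ge 4$. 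For the unweighted sums this is just Newton's identity in the guise $p_i=e_1p_{i-1}-e_2p_{i-2}+e_3p_{i-3}$ with $p_i=3U(X^i)$. It thus remains only to identify $e_1,e_2,e_3$ as the polynomials $e_1=30X-108X^2+81X^3$, $e_2=12X-9X^2$, $e_3=X$, which matches the coefficients above.

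This identification is the main obstacle, and it is where Theorem \ref{th:UX} is really used. Newton's identities give $e_1=p_1$, $e_2=\tfrac12(p_1^2-p_2)$, and $e_3=\tfrac16(p_1^3-3p_1p_2+2p_3)$ with $p_j=3U(X^j)$; substituting the explicit formulas of Theorem \ref{th:UX} and simplifying, one must check that the a priori degree-$6$ and degree-$9$ expressions for $e_2$ and $e_3$ collapse all the way down to $12X-9X^2$ and $X$. This large cancellation is the single genuinely computational point, though it is entirely finite and mechanical. I expect the cleanest route is instead to read off $e_1,e_2,e_3$ directly from the $3$-dissection \eqref{eq:1/-phi--3dis}: with $q'=\omega^kq^{1/3}$ one finds $y_k=X^{-1}\big(1-2\omega^kq^{1/3}w(q)\big)^{-4}$, whence $\prod_k\big(1-2\omega^kq^{1/3}w(q)\big)=1-8qw(q)^3=X^{-1}$ by Lemma \ref{le:qw3}, giving $e_3=\prod_k y_k=X$ at once, with $e_1,e_2$ following from the same symmetric-function computation. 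Once $e_1,e_2,e_3$ are pinned down, the functional recurrence holds, its coefficientwise form is the stated matrix recurrence, and the induction is complete.
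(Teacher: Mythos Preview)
Your proposal is correct and follows essentially the same route as the paper: both arguments set $x_t=X\!\big(\tfrac{\tau+t}{3}\big)$, use $3U(X^i)=p_i$, invert Newton's identities with the data of Theorem~\ref{th:UX} to find $\sigma_1=30X-108X^2+81X^3$, $\sigma_2=12X-9X^2$, $\sigma_3=X$, and then derive the three-term recurrence for $U(uX^i)$ with $u=1$ and $u=\xi$ from the cubic satisfied by each $x_t$. Your side remark about reading off $e_3=X$ directly from the $3$-dissection and Lemma~\ref{le:qw3} is a nice shortcut the paper does not take, but it is not needed for the argument.
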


\begin{proof}
According to the definition of $(a_{i,j})$ and $(b_{i,j})$, the theorem holds for $1\le i\le 3$. We now deal with the cases $i\ge 4$.

Let
$$x_t=X\left(\frac{\tau+t}{3}\right),\quad t=1,\ldots,3.$$
Let $\sigma_t$ ($t=1,\ldots,3$) be the $t$-th elementary symmetric function of $x_1,\ldots,x_3$, i.e.,
\begin{align*}
\sigma_1&=x_1+x_2+x_3,\\
\sigma_2&=x_1x_2+x_2x_3+x_3x_1,\\
\sigma_3&=x_1x_2x_3.
\end{align*}
Then each $x_t$ ($t=1,\ldots,3$) satisfies
$$x_t^3-\sigma_1 x_t^2+\sigma_2 x_t-\sigma_3=0.$$
For $i\ge 1$, we put $p_i=x_1^i+x_2^i+x_3^i$.

Note that for any integer $i$ we have
$$3U(X^i)=\sum_{t=1}^3 X\left(\frac{\tau+t}{3}\right)^i=p_i.$$
By Theorem \ref{th:UX}, we can write $p_1,\ldots,p_3$ as polynomials of $X$. It follows from Newton's identities (cf.~\cite{Mea1992}) that
\begin{align*}
\sigma_1&=p_1=30 X - 108 X^2 + 81 X^3,\\
\sigma_2&=(\sigma_1 p_1-p_2)/2=12 X - 9 X^2,\\
\sigma_3&=(\sigma_2 p_1-\sigma_1 p_2+p_3)/3=X.
\end{align*}

Let $u(\tau):\mathbb{H}\to\mathbb{C}$ be a complex function. We put $u_t=u\left(\frac{\tau+t}{3}\right)$ for $t=1,\ldots,3$. Then for $i\ge 4$,
\begin{align*}
3U(u X^i)&=\sum_{t=1}^3 u_t x_t^i=\sum_{t=1}^3 u_t (\sigma_1 x_t^{i-1}-\sigma_2 x_t^{i-2}+\sigma_3 x_t^{i-3})\\
&=\sigma_1\sum_{t=1}^3 u_t x_t^{i-1}-\sigma_2\sum_{t=1}^3 u_t x_t^{i-2}+\sigma_3\sum_{t=1}^3 u_t x_t^{i-3}\\
&= 3\left(\sigma_1 U(u X^{i-1})- \sigma_2 U(u X^{i-2}) + \sigma_3U(u X^{i-3})\right).
\end{align*}

At last, we respectively take $u=1$ and $u=\xi$ to arrive at the desired result.
\end{proof}

Let $(d_\alpha)_{\alpha\ge 1}$ be a family of integer sequences with
$$d_1=(9,0,0,\ldots).$$
For $\alpha\ge 2$, we recursively define
\begin{align*}
d_{\alpha}(j)=\begin{cases}
\sum_{k\ge 1} a(k,j)d_{\alpha-1}(k) & \text{if $\alpha$ is even},\\
\sum_{k\ge 1} b(k,j)d_{\alpha-1}(k) & \text{if $\alpha$ is odd},
\end{cases}
\end{align*}
where $d_{\alpha}(j)$ denotes the $j$-th element of sequence $d_\alpha$.

Let
\begin{equation}
\sum_{n\ge 0}g(n)q^n=\frac{E(q^2)^5}{E(q)^4}.
\end{equation}

We have
\begin{theorem}\label{th:g-d}
For $\alpha\ge 1$, we have
\begin{align}
\sum_{n\ge 0} g\left(3^{2\alpha-1}n+\frac{3^{2\alpha}-1}{4}\right)q^n&=\frac{E(q^{6})^5}{E(q^3)^4}\sum_{j\ge 1}d_{2\alpha-1}(j)X^j,\\
\sum_{n\ge 0} g\left(3^{2\alpha}n+\frac{3^{2\alpha}-1}{4}\right) q^n&=\frac{E(q^{2})^5}{E(q)^4}\sum_{j\ge 1}d_{2\alpha}(j)X^j.
\end{align}
\end{theorem}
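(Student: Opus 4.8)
The plan is to prove the two displayed identities simultaneously by induction on $\alpha$, alternating between the ``odd'' formula (prefactor $E(q^{6})^5/E(q^3)^4$) and the ``even'' formula (prefactor $E(q^2)^5/E(q)^4$). Throughout I would write $G(q):=E(q^2)^5/E(q)^4=\sum_{n\ge0}g(n)q^n$, so that the two prefactors are exactly $G(q^3)$ and $G(q)$. Two algebraic facts drive everything. First, the product rearrangement $G(q)=\psi(q)^2/\phi(-q)$ (immediate from $\psi(q)=E(q^2)^2/E(q)$ and $\phi(-q)=E(q)^2/E(q^2)$); substituting this and $G(q^9)=\psi(q^9)^2/\phi(-q^9)$ into the given $\psi,\phi$-form of $\xi$ yields the key relation $q^{-2}G(q)=\xi\,G(q^9)$, equivalently $G(q)=q^2\xi\,G(q^9)$. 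Second, the elementary operator identity $U\big(f(q)\,H(q^3)\big)=U(f(q))\,H(q)$, valid for any two $q$-series, which follows at once from the definition of $U$ by sorting the product's terms according to their exponent modulo $3$.

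First I would settle the base case $\alpha=1$ of the odd formula. Since $g(3n+2)$ is the coefficient of $q^{3n}$ in $q^{-2}G(q)$, we have $\sum_n g(3n+2)q^n=U(q^{-2}G(q))$. Rewriting $q^{-2}G(q)=\xi\,G(q^9)$ and viewing $G(q^9)$ as $H(q^3)$ with $H(q)=G(q^3)$, the operator identity gives $U(\xi\,G(q^9))=U(\xi)\,G(q^3)$, and $U(\xi)=9X$ by Theorem~\ref{th:UxiX}. As $\sum_{j\ge1}d_1(j)X^j=9X$ and $2=(3^2-1)/4$, this is precisely the claimed $\alpha=1$ identity.

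Next I would carry out the two halves of the inductive step. For the passage from the odd formula at level $\alpha$ to the even formula at the same level, the residue constant $(3^{2\alpha}-1)/4$ is unchanged, so one simply applies $U$: the operator identity yields $U\big(G(q^3)\sum_j d_{2\alpha-1}(j)X^j\big)=G(q)\,U\big(\sum_j d_{2\alpha-1}(j)X^j\big)$, and then $U(X^j)=\sum_i a(j,i)X^i$ together with the even-index recurrence $d_{2\alpha}(i)=\sum_k a(k,i)d_{2\alpha-1}(k)$ reassembles the right-hand side as $G(q)\sum_i d_{2\alpha}(i)X^i$. For the passage from the even formula at level $\alpha$ to the odd formula at level $\alpha+1$, one first checks the constant arithmetic $2\cdot3^{2\alpha}+(3^{2\alpha}-1)/4=(3^{2(\alpha+1)}-1)/4$, which shows that extracting $g\big(3^{2\alpha+1}n+(3^{2(\alpha+1)}-1)/4\big)$ amounts to applying $U\circ q^{-2}$; the factor $q^{-2}$ converts $G(q)$ into $\xi\,G(q^9)$ via the key relation, and the operator identity together with $U(\xi X^j)=\sum_i b(j,i)X^i$ and the odd-index recurrence $d_{2\alpha+1}(i)=\sum_k b(k,i)d_{2\alpha}(k)$ produces $G(q^3)\sum_i d_{2\alpha+1}(i)X^i$, completing the induction.

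The genuinely delicate point, and the step I expect to be the main obstacle, is not any of the $U$-computations, which become mechanical once the two key facts are in hand, but rather the correct tracking of the residue constants and of the $q^{-2}$ shift when moving from even to odd level. One must verify that multiplication by $q^{-2}$ is exactly what realizes the jump $(3^{2\alpha}-1)/4\mapsto(3^{2\alpha+2}-1)/4$ in the argument of $g$, and that this is precisely where the factor $\xi$ (rather than $1$) enters, thereby selecting the matrix $b$ instead of $a$ and hence the odd-index branch of the definition of $d_\alpha$. Once this bookkeeping is pinned down, both identities follow for all $\alpha\ge1$.
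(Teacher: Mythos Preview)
Your proposal is correct and follows essentially the same approach as the paper's own proof: induction on $\alpha$ using $U(\xi)=9X$ for the base case, applying $U$ (pulling the $q^3$-factor through) for the odd-to-even step, and multiplying by $\xi$ before applying $U$ for the even-to-odd step. Your explicit isolation of the identity $q^{-2}G(q)=\xi\,G(q^9)$ and of the rule $U\big(f(q)H(q^3)\big)=U(f(q))\,H(q)$ is exactly what the paper does implicitly when it factors $E(q^3)^4/E(q^6)^5$ (resp.\ $E(q^9)^4/E(q^{18})^5$) out of the $U$-operator.
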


\begin{proof}
Theorem \ref{th:UxiX} tells us $U(\xi)=9X=\sum_{j\ge 1}d_1(j)X^j$. On the other hand, we have
\begin{align*}
U(\xi)&=U\left(q^{-2}\frac{\ee{2}^5 \ee{9}^4}{\ee{1}^4 \ee{18}^5}\right)=\frac{\ee{3}^4}{\ee{6}^5}U\left(\sum_{n\ge 0} g(n) q^{n-2}\right)\\
&=\frac{\ee{3}^4}{\ee{6}^5}\sum_{n\ge 0} g(3n+2) q^{n}.
\end{align*}
Theorem \ref{th:g-d} is therefore valid for $\alpha=1$.

We now assume that the theorem holds for some odd positive integer $2\alpha-1$. Then we have
$$\sum_{j\ge 1}d_{2\alpha-1}(j)X^j=\frac{\ee{3}^4}{\ee{6}^5} \sum_{n\ge 0} g\left(3^{2\alpha-1}n+\frac{3^{2\alpha}-1}{4}\right)q^n.$$

We now apply the $U$-operator to both sides of the above identity. Then the l.h.s.~ becomes
\begin{align*}
U\left(\sum_{j\ge 1}d_{2\alpha-1}(j)X^j\right)&=\sum_{j\ge 1}d_{2\alpha-1}(j)U(X^j)=\sum_{j\ge 1}d_{2\alpha-1}(j) \sum_{\ell\ge 1}a(j,\ell)X^\ell\\
&=\sum_{\ell\ge 1}\left(\sum_{j\ge 1}a(j,\ell) d_{2\alpha-1}(j)\right)X^\ell=\sum_{\ell\ge 1}d_{2\alpha}(\ell)X^\ell.
\end{align*}
On the other hand, the r.h.s.~is
\begin{align*}
U\left(\frac{\ee{3}^4}{\ee{6}^5} \sum_{n\ge 0} g\left(3^{2\alpha-1}n+\frac{3^{2\alpha}-1}{4}\right)q^n\right)= \frac{\ee{1}^4}{\ee{2}^5}\sum_{n\ge 0} g\left(3^{2\alpha}n+\frac{3^{2\alpha}-1}{4}\right) q^n.
\end{align*}
Combining them together, we have
$$\sum_{j\ge 1}d_{2\alpha}(j)X^j=\frac{\ee{1}^4}{\ee{2}^5}\sum_{n\ge 0} g\left(3^{2\alpha}n+\frac{3^{2\alpha}-1}{4}\right) q^n.$$

We next multiply by $\xi$ on both sides of the above identity and then apply the $U$-operator. Then the l.h.s.~ becomes
\begin{align*}
U\left(\sum_{j\ge 1}d_{2\alpha}(j)\xi X^j\right)&=\sum_{j\ge 1}d_{2\alpha}(j)U(\xi X^j)=\sum_{j\ge 1}d_{2\alpha}(j) \sum_{\ell\ge 1}b(j,\ell)X^\ell\\
&=\sum_{\ell\ge 1}\left(\sum_{j\ge 1}b(j,\ell) d_{2\alpha}(j)\right)X^\ell=\sum_{\ell\ge 1}d_{2\alpha+1}(\ell)X^\ell.
\end{align*}
On the other hand, the r.h.s.~is
\begin{align*}
&U\left(q^{-2}\frac{\ee{9}^4}{\ee{18}^5} \sum_{n\ge 0} g\left(3^{2\alpha}n+\frac{3^{2\alpha}-1}{4}\right)q^n\right)\\
&\qquad= \frac{\ee{3}^4}{\ee{6}^5}\sum_{n\ge 0} g\left(3^{2\alpha+1}n+\frac{3^{2\alpha+2}-1}{4}\right) q^n.
\end{align*}
Hence
$$\sum_{j\ge 1}d_{2\alpha+1}(j)X^j=\frac{\ee{3}^4}{\ee{6}^5}\sum_{n\ge 0} g\left(3^{2\alpha+1}n+\frac{3^{2\alpha+2}-1}{4}\right) q^n.$$

The theorem therefore follows by induction.
\end{proof}

At last, we notice that the two infinite matrices $(a_{i,j})$ and $(b_{i,j})$ are row and column finite. Define $(t_{i,j})_{i\ge 1,\, j\ge 1}$ by
$$t_{i,j}=\sum_{k\ge 1} a_{i,k}b_{k,j}.$$
Then the above sum is indeed a finite sum. From the recurrence relation for $d_\alpha$, one has, for $\alpha\ge 1$,
\begin{equation}\label{eq:2a+1}
d_{2\alpha+1}(j)=\sum_{i\ge 1}t(i,j)d_{2\alpha-1}(i).
\end{equation}

\section{The $3$-adic orders}

\subsection{The $3$-adic orders}

For any integer $n$, let $\pi(n)$ be the $3$-adic order of $n$ with the convention that $\pi(0)=\infty$. Let $\fl{x}$ be the largest integer not exceeding $x$.

We begin with the $3$-adic orders of $(a_{i,j})$ and $(b_{i,j})$.

\begin{theorem}
For $i$ and $j\ge 1$, it holds that
\begin{align}
\pi(a(i,j))&\ge \fl{\frac{3j-i-1}{2}},\label{ineq:a}\\
\pi(b(i,j))&\ge \fl{\frac{3j-i}{2}}.\label{ineq:b}
\end{align}
\end{theorem}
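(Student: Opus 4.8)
The plan is to prove both bounds at once by strong induction on $i$, using the single recurrence that $(a_{i,j})$ and $(b_{i,j})$ share. Since the two target exponents differ only by a shift inside the floor, I would unify them: set $c=1$ for the matrix $(a_{i,j})$ and $c=0$ for $(b_{i,j})$, so that both \eqref{ineq:a} and \eqref{ineq:b} read
$$\pi\big(m(i,j)\big)\ge \fl{\frac{3j-i-c}{2}}$$
for the relevant matrix $m$. Throughout, entries with a nonpositive second index (or otherwise undefined) are $0$, and since $\pi(0)=\infty$ these satisfy every lower bound vacuously, so they never interfere.

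The base cases $1\le i\le 3$ are a finite check: I would read the explicit nonzero entries off Theorems \ref{th:UX} and \ref{th:UxiX} and verify the inequality coefficient by coefficient (e.g.\ $a(2,5)=-5832=-2^3\cdot 3^6$ gives $\pi=6=\fl{(15-2-1)/2}$, and similarly for the rest).

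For the inductive step $i\ge 4$ I would feed the recurrence
\begin{align*}
m(i,j)=&30m(i-1,j-1)-108m(i-1,j-2)+81m(i-1,j-3)\\
&-12m(i-2,j-1)+9m(i-2,j-2)+m(i-3,j-1)
\end{align*}
into the ultrametric inequality $\pi(x+y)\ge\min\{\pi(x),\pi(y)\}$, reducing the claim to showing that each of the six summands already has $3$-adic order at least $\fl{(3j-i-c)/2}$. The mechanism is a precise calibration between the $3$-adic orders of the coefficients,
$$\pi(30)=1,\quad\pi(108)=3,\quad\pi(81)=4,\quad\pi(12)=1,\quad\pi(9)=2,\quad\pi(1)=0,$$
and the index shifts they carry. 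Writing $x=3j-i-c$, the inductive hypothesis bounds the order of a summand $\text{coeff}\cdot m(i',j')$ below by $\pi(\text{coeff})+\fl{(x+s)/2}$, where $s=3(j'-j)-(i'-i)$ is the shift of its numerator; one computes $s\in\{-2,-5,-8,-1,-4,0\}$ for the six terms in order, and in every case $\pi(\text{coeff})+\fl{(x+s)/2}\ge\fl{x/2}$.

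I do not expect a deep obstacle here; the only point needing care is that the four terms with even shift ($s=-2,-8,-4,0$, from the coefficients $30,81,9,1$) meet the bound exactly and independently of parity, whereas the two terms with odd shift ($s=-5$ and $s=-1$, from $108$ and $12$) require splitting on the parity of $x$. In each of those two, the order requirement is tightest when $x$ is even, and there the coefficient valuation ($3$ and $1$ respectively) exactly matches what is needed, while for odd $x$ the bound is strictly exceeded. Confirming this parity-robustness for the two odd-shift terms is the crux of the verification; once it is in place, taking the minimum over the six summands closes the induction.
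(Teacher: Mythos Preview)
Your proposal is correct and follows essentially the same route as the paper's own proof: verify the base cases $1\le i\le 3$ by direct inspection of the coefficients in Theorems~\ref{th:UX} and~\ref{th:UxiX}, then for $i\ge 4$ apply the ultrametric inequality to the six summands of the common recurrence, matching the $3$-adic valuation of each coefficient against its index shift. Your unification of the two inequalities via the parameter $c\in\{0,1\}$ and the explicit parity discussion for the odd-shift terms are minor presentational refinements, but the argument is the same.
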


\begin{proof}
One may first check \eqref{ineq:a} directly for $i=1,\ldots,3$. Assume that \eqref{ineq:a} holds for $1,\ldots,i-1$ with some $i\ge 4$. We have (for some undefined entries like $a(i,0)$, since we assign its value to be $0$, its $3$-adic order is therefore $\infty$):
\begin{align*}
\pi(a(i-1,j-1))+1\ge \fl{\dfrac{3(j-1)-(i-1)-1}{2}}+1&\ge \fl{\dfrac{3j-i-1}{2}},\\
\pi(a(i-1,j-2))+3\ge \fl{\dfrac{3(j-2)-(i-1)-1}{2}}+3&\ge \fl{\dfrac{3j-i-1}{2}},\\
\pi(a(i-1,j-3))+4\ge \fl{\dfrac{3(j-3)-(i-1)-1}{2}}+4&\ge \fl{\dfrac{3j-i-1}{2}},\\
\pi(a(i-2,j-1))+1\ge \fl{\dfrac{3(j-1)-(i-2)-1}{2}}+1&\ge \fl{\dfrac{3j-i-1}{2}},\\
\pi(a(i-2,j-2))+2\ge \fl{\dfrac{3(j-2)-(i-2)-1}{2}}+2&\ge \fl{\dfrac{3j-i-1}{2}},\\
\pi(a(i-3,j-1))+0\ge \fl{\dfrac{3(j-1)-(i-3)-1}{2}}+0&\ge \fl{\dfrac{3j-i-1}{2}}.
\end{align*}
It follows from the recurrence relation of $(a_{i,j})$ that $\pi(a(i,j))$ is at least the minimum of the l.h.s.~of the above 6 inequalities. We hence obtain \eqref{ineq:a} by induction.

\eqref{ineq:b} follows in the same way.
\end{proof}

As a direct consequence, we have

\begin{corollary}
For $i$ and $j\ge 1$, it holds that
\begin{equation}\label{ineq:t}
\pi(t(i,j))\ge \min_{k\ge 1}\left\{\pi(a(i,k))+\pi(b(k,j))\right\}\ge \min_{k\ge 1}\left\{\fl{\frac{3k-i-1}{2}}+\fl{\frac{3j-k}{2}}\right\}.
\end{equation}
\end{corollary}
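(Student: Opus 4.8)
The plan is to derive this purely formally from the two defining properties of the $3$-adic order together with the bounds \eqref{ineq:a} and \eqref{ineq:b} just established; no new computation is needed. First I would record the observation made immediately before the statement: since the matrices $(a_{i,j})$ and $(b_{i,j})$ are row and column finite, for each fixed pair $i,j$ the defining sum $t(i,j)=\sum_{k\ge 1}a_{i,k}b_{k,j}$ has only finitely many nonzero terms. This finiteness is what legitimizes applying the ultrametric inequality to an actual finite sum and guarantees that the minimum over $k$ is attained.

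Next I would invoke the two valuation axioms. Additivity of $\pi$ on products gives, for each $k$, the identity $\pi(a_{i,k}b_{k,j})=\pi(a_{i,k})+\pi(b_{k,j})$, while the non-Archimedean inequality $\pi(x+y)\ge\min\{\pi(x),\pi(y)\}$, applied across the finitely many terms, yields
$$\pi(t(i,j))\ge \min_{k\ge 1}\bigl\{\pi(a_{i,k})+\pi(b_{k,j})\bigr\}.$$
This is exactly the first inequality in \eqref{ineq:t}. The convention $\pi(0)=\infty$ ensures that the vanishing terms $a_{i,k}b_{k,j}=0$ contribute $\infty$ and so never lower the minimum.

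Finally I would feed in the previous theorem in the forms $\pi(a(i,k))\ge\fl{\frac{3k-i-1}{2}}$ and $\pi(b(k,j))\ge\fl{\frac{3j-k}{2}}$. Adding these two bounds for each fixed $k$ gives a termwise lower bound, and taking the minimum over $k$ preserves the inequality, producing
$$\min_{k\ge 1}\bigl\{\pi(a_{i,k})+\pi(b_{k,j})\bigr\}\ge \min_{k\ge 1}\left\{\fl{\frac{3k-i-1}{2}}+\fl{\frac{3j-k}{2}}\right\},$$
which is the second inequality. I expect no genuine obstacle here, as the corollary is a formal consequence of the valuation axioms and the preceding theorem; the only point deserving a moment's care is the finiteness of the sum $t(i,j)$, which is precisely what allows the ultrametric estimate and the attainment of the minimum to be invoked without further justification.
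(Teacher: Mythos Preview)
Your proposal is correct and matches the paper's approach exactly: the paper presents this corollary simply as ``a direct consequence'' of the preceding theorem without further argument, and you have spelled out precisely the routine valuation-theoretic reasoning that is implicit in that phrase. There is nothing to add.
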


On the other hand, one may compute the values of $(\pi(t(i,j)))_{i\ge 1,\, j\ge 1}$ for some small $i$ and $j$, which are listed below
$$\begin{pmatrix}
2 & 2 & 4 & \cdots\\
3 & 2 & 4 & \cdots\\
2 & 2 & 4 & \cdots\\
0 & 3 & 3 & \cdots\\
0 & 2 & 3 & \cdots\\
\vdots & \vdots & \vdots & \ddots
\end{pmatrix}_{i\ge 1,\, j\ge 1}.$$

We next show

\begin{theorem}\label{th:3order}
For $\alpha$ and $j\ge 1$, it holds that
\begin{align}
\pi(d_{2\alpha-1}(j))\ge 2\alpha+\fl{\frac{2j-2}{3}}.\label{ineq:d}
\end{align}
\end{theorem}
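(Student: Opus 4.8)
The plan is to prove the bound \eqref{ineq:d} by induction on $\alpha$, using the recurrence $d_{2\alpha+1}(j)=\sum_{i\ge 1}t(i,j)d_{2\alpha-1}(i)$ from \eqref{eq:2a+1} together with the $3$-adic bounds on $t(i,j)$ from \eqref{ineq:t}. The base case $\alpha=1$ amounts to checking that $\pi(d_1(j))\ge 2+\fl{\frac{2j-2}{3}}$; since $d_1=(9,0,0,\ldots)$ we have $\pi(d_1(1))=2$ and $\pi(d_1(j))=\infty$ for $j\ge 2$, so the base case holds (indeed with room to spare for $j\ge 2$). For the inductive step, assuming $\pi(d_{2\alpha-1}(i))\ge 2\alpha+\fl{\frac{2i-2}{3}}$ for all $i\ge 1$, I would estimate
\begin{equation*}
\pi(d_{2\alpha+1}(j))\ge \min_{i\ge 1}\left\{\pi(t(i,j))+\pi(d_{2\alpha-1}(i))\right\}\ge \min_{i\ge 1}\left\{\pi(t(i,j))+2\alpha+\fl{\tfrac{2i-2}{3}}\right\},
\end{equation*}
and the goal becomes showing this minimum is at least $2(\alpha+1)+\fl{\frac{2j-2}{3}}=2\alpha+2+\fl{\frac{2j-2}{3}}$. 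After subtracting $2\alpha$, the entire burden reduces to the purely arithmetic inequality
\begin{equation*}
\pi(t(i,j))+\fl{\tfrac{2i-2}{3}}\ge 2+\fl{\tfrac{2j-2}{3}}\qquad\text{for all }i,j\ge 1,
\end{equation*}
which no longer involves $\alpha$ and can be attacked using \eqref{ineq:t}.

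The core of the argument is therefore to bound $\pi(t(i,j))$ from below well enough. Feeding \eqref{ineq:t} into the above, I would need
\begin{equation*}
\min_{k\ge 1}\left\{\fl{\tfrac{3k-i-1}{2}}+\fl{\tfrac{3j-k}{2}}\right\}+\fl{\tfrac{2i-2}{3}}\ge 2+\fl{\tfrac{2j-2}{3}},
\end{equation*}
so the plan is to analyze the inner minimum over $k$ as a function of $i,j$ and then combine with the floor term $\fl{\frac{2i-2}{3}}$. I would handle the floors by writing $i$ and $k$ in residue classes modulo the relevant moduli (so that $\fl{\frac{3k-i-1}{2}}$, $\fl{\frac{3j-k}{2}}$, $\fl{\frac{2i-2}{3}}$, and $\fl{\frac{2j-2}{3}}$ each become exact linear expressions), reducing the estimate to a finite case check over residues of $i,k\bmod 6$ together with the monotone leading behavior in $i,j$. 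The main subtlety I anticipate is that the bound from \eqref{ineq:t} alone is slightly too weak for a handful of small indices $i$ (as the explicitly tabulated matrix $(\pi(t(i,j)))$ suggests: e.g.\ the row $i=4$ and $i=5$ have a leading entry $0$, and the combinatorial bound is not sharp there), so I expect to supplement the general floor estimate with a direct verification of the inequality for the first few rows $i=1,2,3$ (and possibly $i=4,5$) using the computed values of $\pi(t(i,j))$.

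Concretely, the key steps in order are: (i) record the base case from $d_1$; (ii) set up the inductive step and reduce it to the $\alpha$-free inequality above via \eqref{eq:2a+1} and the ultrametric estimate $\pi(\sum_i t(i,j)d_{2\alpha-1}(i))\ge\min_i\{\pi(t(i,j))+\pi(d_{2\alpha-1}(i))\}$; (iii) plug in the corollary bound \eqref{ineq:t} and minimize over $k$; (iv) dispatch the resulting floor inequality by splitting into residue classes modulo $6$ and checking the finitely many cases, handling any small-$i$ exceptions separately via the tabulated values of $\pi(t(i,j))$. The hardest part will be step (iv)---verifying that the doubly-floored lower bound coming from \eqref{ineq:t} really does dominate $2+\fl{\frac{2j-2}{3}}$ uniformly in $j$ while accounting for the worst choice of $k$ in the inner minimum; this is where the interplay between the three floor functions must be controlled carefully, and where a naive estimate risks losing the extra $+2$ that the induction needs.
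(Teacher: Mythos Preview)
Your proposal is correct and follows essentially the same route as the paper: induction on $\alpha$ via \eqref{eq:2a+1}, reduction to an $\alpha$-free floor inequality using \eqref{ineq:t}, and supplementing the general bound with the tabulated values of $\pi(t(i,j))$ for the finitely many exceptional indices where the combinatorial estimate is too weak. The only tactical difference is that the paper organizes the exceptional cases by $j$ rather than by $i$: it first shows the general floor estimate gives an excess term $\fl{(5j+i-21)/6}$, which is nonnegative for $j\ge 4$, and then for $j=1,2,3$ splits into small $i$ (using the computed $\pi(t(i,j))$) versus large $i$ (using $\fl{(2i-2)/3}$ alone); your residue-class approach would arrive at the same verification.
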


\begin{proof}
Since $d_1=(9,0,0,\ldots)$, we see that \eqref{ineq:d} holds for $\alpha=1$. Assume that the theorem is valid for some $\alpha\ge 1$. We want to show
$$\pi(d_{2\alpha+1}(j))\ge 2\alpha+2+\fl{\frac{2j-2}{3}}.$$
Then the theorem follows by induction.

It follows from \eqref{eq:2a+1} that
\begin{align*}
\pi(d_{2\alpha+1}(j))\ge \min_{i\ge 1}\left\{\pi(d_{2\alpha-1}(i))+\pi(t(i,j))\right\}.
\end{align*}

We further know from \eqref{ineq:t} that
\begin{align*}
\pi(d_{2\alpha-1}(i))+\pi(t(i,j))&\ge 2\alpha+\fl{\frac{2i-2}{3}}+\min_{k\ge 1}\left\{\fl{\frac{3k-i-1}{2}}+\fl{\frac{3j-k}{2}}\right\}\\
&\ge 2\alpha+\fl{\frac{2i-2}{3}}+\min_{k\ge 1}\left\{\fl{\frac{3j+2k-i-3}{2}}\right\}\\
&=2\alpha+\fl{\frac{2i-2}{3}}+\fl{\frac{3j-i-1}{2}}\\
&\ge 2\alpha+2+\fl{\frac{2j-2}{3}}+\fl{\frac{5j+i-21}{6}}.
\end{align*}
Note that $i\ge 1$. Hence for $j\ge 4$, we have $5j+i\ge 21$. Hence we merely need to check the cases $j=1,\ldots,3$.

When $j=1$ or $2$, we need to show
$$\pi(d_{2\alpha+1}(j))\ge 2\alpha+2.$$
Note that
$$\pi(d_{2\alpha-1}(i))+\pi(t(i,j))\ge 2\alpha+\fl{\frac{2i-2}{3}}+\pi(t(i,j)).$$
We have $\fl{(2i-2)/3}\ge 0$ and $\pi(t(i,j))\ge 2$ when $1\le i\le 3$ and $j=1$ or $2$, and $\fl{(2i-2)/3}\ge 2$ and $\pi(t(i,j))\ge 0$ when $i\ge 4$. Hence for $j=1$ or $2$
$$\pi(d_{2\alpha+1}(j))\ge \min_{i\ge 1}\left\{\pi(d_{2\alpha-1}(i))+\pi(t(i,j))\right\}\ge 2\alpha+2.$$

When $j=3$, we need to show
$$\pi(d_{2\alpha+1}(3))\ge 2\alpha+3.$$
Note again that
$$\pi(d_{2\alpha-1}(i))+\pi(t(i,3))\ge 2\alpha+\fl{\frac{2i-2}{3}}+\pi(t(i,3)).$$
We have $\fl{(2i-2)/3}\ge 0$ and $\pi(t(i,3))\ge 3$ when $1\le i\le 5$, and $\fl{(2i-2)/3}\ge 3$ and $\pi(t(i,3))\ge 0$ when $i\ge 6$. Hence
$$\pi(d_{2\alpha+1}(3))\ge \min_{i\ge 1}\left\{\pi(d_{2\alpha-1}(i))+\pi(t(i,3))\right\}\ge 2\alpha+3.$$

We arrive at the desired result.
\end{proof}

\subsection{Proof of the main theorem}

It follows from Theorems \ref{th:g-d} and \ref{th:3order} that
\begin{align*}
\sum_{n\ge 0} g\left(3^{2\alpha-1}n+\frac{3^{2\alpha}-1}{4}\right)q^n&=\frac{E(q^{6})^5}{E(q^3)^4}\sum_{j\ge 1}d_{2\alpha-1}(j)X^j\equiv 0 \pmod{3^{2\alpha}},
\end{align*}
since $\pi(d_{2\alpha-1}(j))\ge 2\alpha$ for all $j$. Hence we have

\begin{theorem}
For $\alpha\ge 1$ and $n\ge 0$, we have
\begin{equation}
g\left(3^{2\alpha-1}n+\frac{3^{2\alpha}-1}{4}\right)\equiv 0 \pmod{3^{2\alpha}}.
\end{equation}
\end{theorem}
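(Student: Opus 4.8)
The plan is to read the congruence off directly from the two structural results already in hand: the generating-function identity of Theorem~\ref{th:g-d} and the $3$-adic valuation bound of Theorem~\ref{th:3order}. Specializing the odd case of Theorem~\ref{th:g-d} gives
$$\sum_{n\ge 0} g\left(3^{2\alpha-1}n+\frac{3^{2\alpha}-1}{4}\right)q^n=\frac{E(q^{6})^5}{E(q^3)^4}\sum_{j\ge 1}d_{2\alpha-1}(j)X^j,$$
so everything reduces to controlling the $3$-adic size of the coefficients $d_{2\alpha-1}(j)$, together with the integrality of $X$ and of the prefactor $E(q^6)^5/E(q^3)^4$.

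First I would invoke Theorem~\ref{th:3order}: for all $j\ge 1$ one has $\pi(d_{2\alpha-1}(j))\ge 2\alpha+\fl{\frac{2j-2}{3}}$. Since $\fl{\frac{2j-2}{3}}\ge 0$ whenever $j\ge 1$, this already forces the uniform bound $\pi(d_{2\alpha-1}(j))\ge 2\alpha$, i.e.\ $3^{2\alpha}\mid d_{2\alpha-1}(j)$ for every $j$.

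Next I would check integrality. By definition $X=\ee{2}^4\ee{3}^8/(\ee{1}^8\ee{6}^4)$ is a quotient of eta-type products each with constant term $1$, hence $X\in\mathbb{Z}[[q]]$, and the same reasoning applies to the prefactor $E(q^6)^5/E(q^3)^4$. Because the matrices $(a_{i,j})$ and $(b_{i,j})$ are row and column finite, the sequence $d_{2\alpha-1}$ has only finitely many nonzero entries, so $\sum_{j\ge 1}d_{2\alpha-1}(j)X^j$ is really a finite $\mathbb{Z}$-linear combination of powers of $X$ and therefore lies in $\mathbb{Z}[[q]]$. As $3^{2\alpha}$ divides each $d_{2\alpha-1}(j)$, every $q$-coefficient of this sum is divisible by $3^{2\alpha}$, and multiplying by the integral prefactor preserves that divisibility. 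Reading off the coefficient of $q^n$ on both sides yields $g\left(3^{2\alpha-1}n+\frac{3^{2\alpha}-1}{4}\right)\equiv 0\pmod{3^{2\alpha}}$ for all $n\ge 0$.

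I expect no genuine obstacle at this stage: all the substantive work has already been done, namely the $U$-operator evaluations of Theorems~\ref{th:UX} and~\ref{th:UxiX}, the recurrence for $(a_{i,j})$ and $(b_{i,j})$, and above all the inductive valuation estimate of Theorem~\ref{th:3order}. The one point deserving a moment's attention is that the sum over $j$ be a bona fide power series; this is why the finite support of $d_{2\alpha-1}$, inherited from the row and column finiteness of the defining matrices, is needed (note that $X$ has constant term $1$, so an infinite such sum would not even be $q$-adically meaningful). Finally, the advertised congruence for $\bb$ in Theorem~\ref{th:cong} follows at once by combining this result with $\bb(2n)=g(n)$ from~\eqref{eq:a2n}.
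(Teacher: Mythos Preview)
Your proposal is correct and follows essentially the same route as the paper: invoke Theorem~\ref{th:g-d} for the generating-function identity and Theorem~\ref{th:3order} for the uniform bound $\pi(d_{2\alpha-1}(j))\ge 2\alpha$, then conclude. Your additional remarks on integrality of $X$ and of the prefactor, and on the finite support of $d_{2\alpha-1}$ (needed because $X$ has constant term $1$), make explicit some points the paper leaves implicit, but there is no difference in strategy.
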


At last, we notice from \eqref{eq:a2n} that
$$\sum_{n\ge 0}\bb(2n) q^n =\frac{E(q^2)^5}{E(q)^4} = \sum_{n\ge 0}g(n)q^n.$$
The desired congruence \eqref{eq:cong} therefore follows.

\section{An alternative proof of (\ref{even})}

We end this paper with a more direct proof of \eqref{even}. Recall that Ramanujan's bilateral ${}_1\psi_1$ identity \cite[Eq.~(5.2.1)]{GR1990} tells us
\begin{equation}\label{eq:1psi1}
\sum_{n=-\infty}^\infty \frac{(a;q)_n z^n}{(b;q)_n} = \frac{(q,b/a,az,q/az;q)_\infty}{(b,q/a,z,b/az;q)_\infty},\quad |b/a|<|z|<1.
\end{equation}

We now replace $q$ by $q^2$ and take $a=-q$, $b=q^3$, $z=q$ in \eqref{eq:1psi1}. Then
$$\sum_{n=-\infty}^\infty \frac{(-q;q^2)_n q^n}{(q^3;q^2)_n} = \frac{(q^2,-q^2,-q^2,-1;q^2)_\infty}{(q^3,-q,q,-q;q^2)_\infty}.$$

Note that
\begin{align*}
\sum_{n=-\infty}^\infty \frac{(-q;q^2)_n q^n}{(q^3;q^2)_n}&= \sum_{n\ge 0} \frac{(-q;q^2)_n q^n}{(q^3;q^2)_n}+\sum_{n\ge 1} \frac{(-q;q^2)_{-n} q^{-n}}{(q^3;q^2)_{-n}}\\
&=(1-q)\left(\sum_{n\ge 0}\frac{(-q;q^2)_nq^n}{(q;q^2)_{n+1}}+\sum_{n\ge 0}\frac{(q;q^2)_n(-q)^n}{(-q;q^2)_{n+1}}\right).
\end{align*}
On the other hand,
$$\frac{(q^2,-q^2,-q^2,-1;q^2)_\infty}{(q^3,-q,q,-q;q^2)_\infty}=2(1-q)\frac{(q^4;q^4)_\infty^5}{(q^2;q^2)_{\infty}^4}.$$

It follows that
$$\sum_{n\ge 0}\frac{(-q;q^2)_nq^n}{(q;q^2)_{n+1}}+\sum_{n\ge 0}\frac{(q;q^2)_n(-q)^n}{(-q;q^2)_{n+1}}
=2\frac{(q^4;q^4)_\infty^5}{(q^2;q^2)_{\infty}^4},$$
which is essentially
$$\beta(q)+\beta(-q)=2\frac{(q^4;q^4)_\infty^5}{(q^2;q^2)_{\infty}^4}.$$

\subsection*{Acknowledgements}

We want to thank Ae Ja Yee for some helpful discussions. C.~Wang was partially supported by the outstanding doctoral dissertation cultivation plan of action (No.~YB2016028).

\bibliographystyle{amsplain}

\end{document}